\newcommand\pair[1]{\langle{#1}\rangle}
\newcommand{\sph}{\mathbb{S}}
\newcommand{\hyp}{\mathbb{H}}
\newcommand{\LM}{\mathbb{L}}
\newcommand{\keywords}[1]{\emph{Keywords: #1}}
\newcommand{\subjclass}[1]{\emph{MSC 2010: #1}}
\theoremstyle{theorem}
\newtheorem{theorem}{Theorem}[section]
\newtheorem{proposition}[theorem]{Proposition}
\newtheorem{lem}[theorem]{Lemma}
\theoremstyle{remark}
\newtheorem{remark}[theorem]{Remark}
\numberwithin{equation}{section}
\begin{document}

\title{A local characterization for constant curvature metrics in
  2-dimensional Lorentz manifolds}
\author{Ivo Terek Couto\thanks{The first author was supported by FAPESP,
    grant $2014/09781-8$} \and Alexandre Lymberopoulos}

\date{\today}

\maketitle

\begin{abstract}
  In this paper we define \emph{Fermi-type coordinates} in a
  $2-$dimensional Lorentz manifold, and use this coordinate system to
  provide a local characterization of constant Gaussian curvature
  metrics for such manifolds, following a classical result from
  Riemann. We then exhibit particular isometric immersions of such
  metrics in the pseudo-Riemannian ambients $\LM^3\equiv \R^3_1$ and
  $\R^3_2$.

  \bigskip

  \noindent\subjclass{Primary: 53B30}\\
  \noindent\keywords{Lorentz metrics, Constant Gaussian curvature, pseudo-Riemannian surfaces}
\end{abstract}

\section{Introduction}

Constant sectional curvature metrics on pseudo-Riemannian manifolds
appear naturally in physics as some of the solutions of Einstein's
equation in vacuum \[R_{\mu \nu} -\frac{1}{2}R g_{\mu \nu}
+\Lambda g_{\mu \nu}=0.\]

If the \emph{cosmological constant} $\Lambda$ is positive a particular
solution is the \emph{de Sitter} metric; if $\Lambda=0$, it is the
Lorentz-Minkowski metric; if $\Lambda$ is negative, it is the
\emph{anti-de Sitter} metric.

In $3-$dimensional Lorentz-Minkowski space, surfaces with constant
Gaussian curvature have been studied under a diverse set of hypotheses
with different techniques as in~\cite{RL},~\cite{AEG} and~\cite{GHI}.

The local classification of pseudo-Riemannian metrics of index $s$
and constant sectional curvature on $n-$dimensional manifolds is given
by the following classical result, known as Riemann's theorem
(see~\cite[pp. 69]{Wol}).

\begin{theorem}[Riemann]\label{teo:riemann}
  Let $M$ be a pseudo-Riemannian manifold of dimension $n\geq 2$ and let
  $K$ be a real number. Then the following conditions are equivalent:
  \begin{enumerate}[(i)]
    \item $M$ is of constant curvature $K$.
    \item If $x\in M$, then there are local coordinates $u^i$ on a
    neighborhood of $x$ in which the metric is given by\[{\rm
      d}s^2=\frac{\epsilon_1{\rm d}u^1\otimes {\rm
        d}u^1+\ldots+\epsilon_n{\rm d}u^n\otimes {\rm
        d}u^n}{\left(1+\frac{K}{4}\sum \epsilon_i(u^i)^2\right)^2},\quad
    \epsilon_i=\pm 1.\]
    \item If $x\in M$, then $x$ has a neighborhood which is isometric to
    an open set on some $\sph^n_s$ if $K>0$, $\R^n_s$ if $K=0$, $\hyp^n_s$
    if $K<0$.
  \end{enumerate}
\end{theorem}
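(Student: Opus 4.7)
The plan is to establish the three conditions cyclically, $(ii)\Rightarrow(i)\Rightarrow(iii)\Rightarrow(ii)$, since each arrow admits a standard technique.

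For $(ii)\Rightarrow(i)$, I would work directly in the given chart. Setting $\phi = 1 + (K/4)\sum_i \epsilon_i (u^i)^2$ so that $g_{ij} = \epsilon_i \delta_{ij}/\phi^2$, the Christoffel symbols reduce to simple expressions involving only the first derivatives $\partial_i \log\phi$. Feeding these into the usual formula for the Riemann tensor and simplifying, the goal is to verify the identity $R_{ijk\ell} = K(g_{ik}g_{j\ell}-g_{i\ell}g_{jk})$, which characterizes constant sectional curvature $K$. It is a direct, if somewhat lengthy, coordinate calculation.

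For $(i)\Rightarrow(iii)$, the key tool is Cartan's local isometry theorem for pseudo-Riemannian manifolds: given $(M,g)$ and $(N,h)$ of the same dimension and signature, a linear isometry $L\colon T_xM\to T_yN$ that intertwines their curvature tensors extends to a local isometry near $x$. In our setting both $M$ and the appropriate model $N \in \{\sph^n_s,\R^n_s,\hyp^n_s\}$ have curvature tensor of the universal form $R(X,Y)Z = K(\pair{Y,Z}X - \pair{X,Z}Y)$, so every linear isometry between tangent spaces automatically intertwines curvatures, and Cartan's theorem then supplies the required local isometry.

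For $(iii)\Rightarrow(ii)$, I would construct the coordinates by explicit (pseudo-)stereographic projection from the ambient pseudo-Euclidean space. For $K = 0$ the standard Cartesian coordinates on $\R^n_s$ realize (ii) trivially, and for $K \neq 0$ a direct pullback computation under stereographic projection from $\sph^n_s$ (resp.\ $\hyp^n_s$) onto a coordinate hyperplane produces exactly the conformal factor $(1 + (K/4)\sum \epsilon_i (u^i)^2)^{-2}$. I expect the main obstacle to be $(i)\Rightarrow(iii)$: Cartan's theorem in indefinite signature requires either a parallel-transport argument along geodesics or a careful matching of normal coordinates via the Jacobi equation, which integrates in closed form precisely because the curvature is constant. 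The remaining implications are coordinate computations and fully explicit.
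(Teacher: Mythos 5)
The paper does not actually prove this theorem: it is quoted as a classical result with a pointer to Wolf's \emph{Spaces of Constant Curvature} (p.~69), and everything the authors do later relies only on the $2$-dimensional Lorentz case, which they re-derive via Fermi-type coordinates. So there is no in-paper argument to compare against; judged on its own, your cyclic scheme $(ii)\Rightarrow(i)\Rightarrow(iii)\Rightarrow(ii)$ is the standard textbook route and is sound. The conformal-factor computation for $(ii)\Rightarrow(i)$, the Cartan--Ambrose--Hicks-type local isometry theorem for $(i)\Rightarrow(iii)$ (which does hold in arbitrary signature, so the ``main obstacle'' you flag is genuinely standard, cf.\ O'Neill, Ch.~8), and stereographic projection for $(iii)\Rightarrow(ii)$ all work. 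Two points deserve explicit care if you write this up. First, with the paper's normalizations $\sph^n_s=\{\pair{x,x}_s=1\}$ and $\hyp^n_s=\{\pair{x,x}_{s+1}=-1\}$ the models have curvature $\pm 1$, so for general $K\neq 0$ your Cartan argument produces an isometry onto the rescaled quadric $\{\pair{x,x}=1/K\}$ (or $\{\pair{x,x}=1/K\}$ with the other signature); you should either use these rescaled models or read ``some $\sph^n_s$'' as allowing a radius, and correspondingly the stereographic pullback in $(iii)\Rightarrow(ii)$ must be done for the radius-$1/\sqrt{|K|}$ quadric to land exactly on the factor $\bigl(1+\frac{K}{4}\sum\epsilon_i(u^i)^2\bigr)^{-2}$. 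Second, in indefinite signature the stereographic chart does not cover a neighborhood of every point from a single projection center; you need the homogeneity of $\sph^n_s$ and $\hyp^n_s$ under their isometry groups to move an arbitrary point into the chart's domain, and you should note that the chart is only defined off the locus where the denominator vanishes. With those caveats filled in, your outline is a complete and correct proof strategy.
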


In the previous statement we have
\begin{align*}
  \R^n_s&=\big(\R^n,\pair{\cdot,\cdot}_s\big)\\
  \sph^n_s&=\big\{x\in\R^{n+1}_{s}: \pair{x,x}_s=1\big\}\\
  \hyp^n_s&=\big\{x\in\R^{n+1}_{s+1}: \pair{x,x}_{s+1}=-1\big\},
\end{align*} where
\[\pair{x,y}_s=\sum\limits_{i=1}^{n-s}x^iy^i-\sum\limits_{i=n-s+1}^nx^iy^i,\]
for any $x=(x^1,\ldots,x^n)$ and $y=(y^1,\ldots,y^n)$ in $\R^n$.

When $s=0$ we have $\R^n_0=\R^n$, the usual Euclidean space and
$\sph^n_0=\sph^n$, the standard Euclidean unit sphere. Furthermore, if
$s=1$ we have $\R^n_1=\LM^n$, the Lorentz-Minkowski space; $\sph^n_1$,
the de Sitter space (eventually denoted by $dS_n$); $\hyp^n_0=\hyp^n$,
the usual hyperbolic space with $x_{n+1}>0$ and $\hyp^n_1$ the anti-de
Sitter space (eventually denoted by $AdS_n$).

In particular, when $n=2$ and $s=0$ we have the following result.

\begin{theorem}
  A surface with $K=0$ is locally isometric to the plane, with $K=1$ is
  locally isometric to the unit sphere and with $K=-1$ is locally
  isometric to the hyperbolic plane.
\end{theorem}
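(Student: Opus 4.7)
The plan is to obtain this statement as a direct corollary of Theorem~\ref{teo:riemann} by specializing to the Riemannian case $n=2$, $s=0$, and then reading off the three values $K\in\{0,1,-1\}$. Because $s=0$, every $\epsilon_i$ in Theorem~\ref{teo:riemann}(ii) is $+1$, so the metric is Riemannian and the equivalence $(i)\Leftrightarrow (iii)$ is exactly what we need.

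Concretely, I would argue as follows. Let $S$ be a surface of constant Gaussian curvature $K$. Viewing $S$ as a $2$-dimensional pseudo-Riemannian manifold of index $0$, condition $(i)$ of Theorem~\ref{teo:riemann} holds. By the equivalence $(i)\Leftrightarrow (iii)$, each point of $S$ has a neighborhood isometric to an open subset of $\sph^2_0$, $\R^2_0$ or $\hyp^2_0$ according to the sign of $K$. Plugging $n=2$, $s=0$ into the definitions recalled after the statement of Theorem~\ref{teo:riemann} then identifies these model spaces:
\[
\R^2_0=\R^2,\qquad \sph^2_0=\sph^2,\qquad \hyp^2_0=\hyp^2,
\]
i.e.\ the Euclidean plane, the unit sphere of $\R^3$, and the hyperbolic plane respectively. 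Taking $K=0,1,-1$ in turn yields the three claims.

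There is essentially no obstacle: once Theorem~\ref{teo:riemann} is in hand, this statement is a matter of specialization and notation. The only point worth flagging is that the three model surfaces named in the theorem agree with the models appearing in Theorem~\ref{teo:riemann} for the chosen values of $n$ and $s$, which is immediate from the definitions of $\R^n_s$, $\sph^n_s$ and $\hyp^n_s$ given above.
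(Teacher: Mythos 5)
Your proposal is correct and is exactly how the paper obtains this result: it is stated as the specialization of Riemann's Theorem~\ref{teo:riemann} to $n=2$, $s=0$, with the model spaces $\R^2_0$, $\sph^2_0$, $\hyp^2_0$ identified with the plane, the unit sphere and the hyperbolic plane. No further argument is given or needed in the paper.
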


In this work we show a simpler proof of the above theorem when $n=2$ and
$s=1$, using Fermi-type coordinates. More precisely we have:

\begin{theorem}
  Let $M$ be a $2-$dimensional pseudo-Riemannian manifold of constant
  curvature\linebreak $K\in\{-1,0,1\}$. Then one and only one of the following
  holds:
  \begin{enumerate}[(i)]
    \item if $K=0$ then $M$ is locally isometric to Lorentz-Minkowski
    plane $\LM^2$, with metric expressed in Fermi-type coordinates as
    ${\rm d}s^2 = -{\rm d}u^2 + {\rm d}v^2$ or ${\rm d}s^2 = {\rm d}\tau^2
    - {\rm d}\vartheta^2$;
    \item if $K=1$ then $M$ is locally isometric to the de Sitter space
    $\sph^2_1$, with metric expressed in Fermi-type coordinates as ${\rm
      d}s^2 = -{\rm d}u^2 + \cosh^2u\,{\rm d}v^2$ or ${\rm d}s^2 = {\rm
      d}\tau^2-\cos^2\tau\,{\rm d}\vartheta^2$;
    \item if $K=-1$ then $M$ is locally isometric to the anti-de Sitter
    space $\hyp^2_1$. In this case the metric is expressed in Fermi-type
    coordinates as ${\rm d}s^2 = -{\rm d}u^2 + \cos^2u\,{\rm d}v^2$ or
    ${\rm d}s^2 = {\rm d}\tau^2-\cosh^2\tau\,{\rm d}\vartheta^2$;
  \end{enumerate}
\end{theorem}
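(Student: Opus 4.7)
The plan is to construct two families of Fermi-type coordinates around a fixed point $p\in M$, each built from an initial non-null geodesic of a definite causal character. Fix a unit-speed geodesic $\alpha$ through $p$ with $\langle\alpha',\alpha'\rangle=\epsilon_0\in\{-1,+1\}$, and let $N$ be the (essentially unique) parallel unit normal field along $\alpha$; necessarily $\langle N,N\rangle=-\epsilon_0$. Define
\[\phi(x,y)=\exp_{\alpha(y)}(xN(y)).\]
The differential of $\phi$ at $(0,0)$ sends $\partial_x$ to $N(0)$ and $\partial_y$ to $\alpha'(0)$, two orthogonal non-null vectors, so $\phi$ is a local diffeomorphism near $(0,0)$ and supplies coordinates $(x,y)$ in which the $y$-axis corresponds to $\alpha$ and the lines $y=\text{const}$ to the transverse geodesics.

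Since each curve $y=\text{const}$ is a unit-speed geodesic one has $\langle\partial_x,\partial_x\rangle\equiv-\epsilon_0$, and the pseudo-Riemannian Gauss lemma (proved by differentiating $\langle\partial_y,\partial_x\rangle$ in $x$ and using that $\partial_x$ is parallel and that $\langle\partial_y,\partial_x\rangle(0,y)=\langle\alpha',N\rangle=0$) gives $\langle\partial_x,\partial_y\rangle=0$. Thus the metric takes the orthogonal form
\[\mathrm{d}s^2=-\epsilon_0\,\mathrm{d}x^2+\epsilon_0 f(x,y)^2\,\mathrm{d}y^2,\]
where $f>0$ satisfies $f(0,y)=1$ (because $\partial_y|_{x=0}=\alpha'$) and $f_x(0,y)=0$ (because $D_x\partial_y|_{x=0}=D_yN=0$, using that $N$ is parallel along $\alpha$). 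Since $\partial_y$ is a Jacobi field along each transverse geodesic and is orthogonal to $\partial_x$, the Jacobi equation on a $2$-dimensional manifold of constant curvature $K$ collapses to $f_{xx}+K(-\epsilon_0)f=0$, that is, $f_{xx}=K\epsilon_0 f$.

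Solving this scalar ODE with the above initial data yields $f\equiv 1$ when $K=0$, $f(x)=\cos x$ when $K\epsilon_0=-1$, and $f(x)=\cosh x$ when $K\epsilon_0=+1$. Relabelling $(x,y)\mapsto(u,v)$ for $\epsilon_0=+1$ (so $\alpha$ is spacelike, $\partial_x$ is timelike, and the metric opens with $-\mathrm{d}u^2$) and $(x,y)\mapsto(\tau,\vartheta)$ for $\epsilon_0=-1$ (so $\alpha$ is timelike, $\partial_x$ is spacelike, and the metric opens with $+\mathrm{d}\tau^2$) recovers precisely the six normal forms listed in (i)--(iii); since each of these is, by inspection, a constant-curvature metric of the right sign, the local isometry with $\LM^2$, $\sph^2_1$, or $\hyp^2_1$ is then a direct application of Riemann's theorem (Theorem~\ref{teo:riemann}). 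The main technical point throughout will be careful sign bookkeeping between $\epsilon_0$, the opposite causal character of the transverse direction, and the sign in the Jacobi equation; one must also shrink the coordinate neighbourhood so that $f>0$ (a genuine constraint in the trigonometric cases, of the form $|x|<\pi/2$) in order to preserve the nondegeneracy of the metric.
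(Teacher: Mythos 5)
Your argument is correct and lands on the same underlying idea as the paper --- Fermi-type coordinates built from the unit-speed geodesics orthogonal to a fixed non-lightlike geodesic, leading to the scalar ODE $f_{xx}=K\epsilon_0 f$ with $f(0,y)=1$, $f_x(0,y)=0$ (your $f$ is the paper's $\sqrt{|G|}$, and your ODE is exactly equation (\ref{eq:K}) with the boundary data of Lemma \ref{lem:bndry_cndtns}) --- but the supporting machinery is genuinely different. The paper never mentions the exponential map, Jacobi fields, or a Gauss lemma: it proves regularity of ${\bf x}(u,v)=\gamma_v(u)$ directly (Proposition \ref{prop:regular}), gets $E=-\epsilon_\gamma$ and $F\equiv 0$ by feeding the geodesic equation (\ref{eq:geo_local}) into the Christoffel-symbol formula (\ref{eq:chris_met}) (which also yields $G_u(0,v)=0$), and then quotes the classical curvature formula for orthogonal coordinates (\ref{eq:K_orthog}) to obtain the ODE. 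Your route --- $\phi(x,y)=\exp_{\alpha(y)}(xN(y))$, the symmetry lemma $D_x\partial_y=D_y\partial_x$ giving $F\equiv0$ and $f_x(0,y)=0$ via parallelism of $N$, and the Jacobi equation collapsing to $f_{xx}+K\langle\partial_x,\partial_x\rangle f=0$ --- is more conceptual and avoids citing the curvature formula, at the cost of invoking the constant-curvature form of the curvature tensor and requiring careful sign bookkeeping (which you do get right: $\langle\partial_x,\partial_x\rangle=-\epsilon_0$ gives $f_{xx}=K\epsilon_0 f$, consistent with (\ref{eq:K})). One small remark on the finish: invoking Riemann's theorem (Theorem \ref{teo:riemann}) for the final ``locally isometric to $\LM^2$, $\sph^2_1$, $\hyp^2_1$'' step is not circular, since the paper states it as a known classical result, but it runs against the paper's declared purpose of giving an independent, simpler proof in the case $n=2$, $s=1$; a more self-contained conclusion is to observe that the model spaces have the same constant curvature and therefore admit Fermi-type coordinates with the identical metric expressions (or to use the explicit immersions of Section 5), so the local isometry is obtained by composing the two parametrizations.
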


\section{Notation and Preliminaries}

Firstly, we establish some notation and recall some standard definitions
and results from pseudo-Riemannian geometry.

A pseudo-Riemannian manifold $(M^n,\pair{\cdot,\cdot})$ where
$\pair{\cdot,\cdot}$ has index $1$ will be called a \emph{Lorentz
  manifold}. In a Lorentz manifold, a tangent vector $X_p \in T_pM$ is
\begin{itemize}
  \item \emph{spacelike} if $\pair{X_p,X_p} > 0$ or if $X_p = 0$;
  \item \emph{timelike} if $\pair{X_p,X_p} < 0$;
  \item \emph{lightlike} if $\pair{X_p,X_p} = 0$ but $X_p \neq 0$.
\end{itemize}

Every tangent vector is of exactly one of the above \emph{causal
  types}. The notion of causal type extends to curves in $M$ in a
natural way: if $\gamma\colon I \to M$ is a curve, then $\gamma$ is said
to be of one of the three causal types above if all its tangent vectors
$\gamma'(t)$ share that causal type.

Recall that a \emph{geodesic} in $M$ is a curve $\gamma\colon I \to M$
such that $\nabla_{\gamma'(t)}\gamma' = 0$ for all $t \in I$, where
$\nabla$ is the \emph{Levi-Civita connection} of the metric, or
equivalently (see~\cite[p. 67]{ON}), if in all local coordinates
$(u^1,\cdots,u^n)$ we
have \begin{equation}\label{eq:geo_local}\ddot{u}^k +
  \sum_{i,j=1}^n\Gamma_{ij}^k \dot{u}^i\dot{u}^j = 0,\mbox{ for all $k
    \in \{1,\ldots,n\}$,}
\end{equation}
where the dots denote the derivatives of the coordinates of $\gamma$
with respect to the curve parameter and the $\Gamma_{ij}^k$ are the
\emph{Christoffel symbols} of the metric in the coordinates chosen. We
also have that geodesics do not change causal type, since \[ \frac{{\rm
    d}}{{\rm d}t}\pair{\gamma'(t),\gamma'(t)} =
2\pair{\gamma'(t),\nabla_{\gamma'(t)}\gamma'} = 0.  \]It is also easy to
see that any non-lightlike curve has a unit-speed reparametrization.

If $M$ is $2-$dimensional, and $(u,v)$ are local coordinates in $M$, we
express the metric $\pair{\cdot,\cdot}$ in these coordinates as: \[ {\rm
  d}s^2 = E\,{\rm d}u^2 + 2F\,{\rm d}u\,{\rm d}v + G\,{\rm
  d}v^2,\]where \[ E = \pair{\partial_u,\partial_u}, \quad F =
\pair{\partial_u,\partial_v}, \quad G =
\pair{\partial_v,\partial_v} \]are the \emph{the coefficients of the
  metric (first fundamental form) in the coordinates $(u,v)$}.

On the following we establish some results about $2-$dimensional Lorentz
manifolds (henceforth called \emph{Lorentz surfaces}) starting with an
analogous result to the existence of orthogonal coordinates for
Riemannian surfaces.

\begin{lem}\label{lem:local_coords}
  Let $X$ and $Y$ be linearly independent vector fields in a
  neighborhood of a point $p$ in a $2-$di\-men\-sional smooth
  manifold. There exists a coordinate system $(u,v)$ at $p$ such that
  $\partial_u=\lambda X$ and $\partial_v=\mu Y$.
\end{lem}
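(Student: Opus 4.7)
The plan is to construct $(u,v)$ directly from the two transverse foliations by integral curves of $X$ and $Y$. Let $\phi^X_t$ and $\phi^Y_s$ denote the local flows of $X$ and $Y$, and set $\alpha(t):=\phi^X_t(p)$ and $\beta(s):=\phi^Y_s(p)$. For $q$ in a small neighborhood of $p$, transversality combined with smooth dependence of ODE solutions on initial data will guarantee that the integral curve of $Y$ through $q$ meets $\alpha$ at a unique point, which I write as $\alpha(u(q))$, and that the integral curve of $X$ through $q$ meets $\beta$ at a unique point $\beta(v(q))$. This defines smooth functions $u,v$ in a neighborhood of $p$, both vanishing at $p$.

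The two key identities are then $du(Y)\equiv 0$ and $dv(X)\equiv 0$, each immediate from the fact that $u$ (resp.\ $v$) is constant along integral curves of $Y$ (resp.\ $X$). Evaluating $du\wedge dv$ on the basis $(X,Y)$ at $p$ therefore reduces to $du(X(p))\,dv(Y(p))$, and since $u\circ\alpha$ and $v\circ\beta$ are by construction the identity function near $0$, both factors equal $1$. Hence $du\wedge dv\neq 0$ at $p$, so $(u,v)$ genuinely defines coordinates on some neighborhood of $p$.

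To conclude that $\partial_u$ is a multiple of $X$, I would write $\partial_u = aX + bY$ in the local frame $\{X,Y\}$ and apply $dv$ to both sides: this gives $0 = dv(\partial_u) = b\,dv(Y)$, and since $dv(Y)\neq 0$ near $p$ one must have $b=0$; then $\lambda:=a$ does the job. The symmetric argument yields $\partial_v = \mu Y$.

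The only real obstacle is verifying that $u$ and $v$ are well-defined and smooth, i.e.\ that the ``hitting points'' of one flow on the other's integral curve exist uniquely and depend smoothly on $q$. This is a standard application of the implicit function theorem to the map $(q,s)\mapsto \phi^Y_s(q)$, exploiting transversality of the $Y$-orbits to $\alpha$ at $p$; once this is settled, everything else is essentially bookkeeping with the duality relations $du(\partial_u)=1=dv(\partial_v)$ and $du(\partial_v)=0=dv(\partial_u)$.
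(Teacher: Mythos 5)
Your proof is correct, but it takes a genuinely different route from the paper. The paper looks for positive rescalings $\lambda,\mu$ making $[\lambda X,\mu Y]=0$: this reduces to two linear first-order PDEs for $\ln\lambda$ and $\ln\mu$, which are solved locally, and then the canonical-form theorem for commuting, linearly independent vector fields (invoked there via Frobenius) produces coordinates with $\partial_u=\lambda X$, $\partial_v=\mu Y$. You instead build the coordinate functions directly from the two transverse foliations: $u$ is read off by sliding along $Y$-orbits to the $X$-orbit through $p$, $v$ symmetrically, smoothness and local uniqueness of the hitting points coming from the flow-box/implicit function theorem applied to $(t,s)\mapsto\phi^Y_s(\phi^X_t(p))$ and its counterpart. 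The identities $du(Y)\equiv 0$, $dv(X)\equiv 0$, together with $du(X)(p)=dv(Y)(p)=1$, give $du\wedge dv\neq 0$ at $p$, and the frame decomposition $\partial_u=aX+bY$ with $dv$ applied kills $b$, yielding $\partial_u=\lambda X$ (with $\lambda(p)=1$, hence positive nearby), and symmetrically $\partial_v=\mu Y$. What each approach buys: yours is more elementary and self-contained, using only ODE existence, smooth dependence and the inverse/implicit function theorem, and it avoids both the PDE-solvability step (which the paper treats rather briskly) and the appeal to Frobenius; the paper's computation, on the other hand, exhibits explicitly the equations that any admissible rescaling pair $(\lambda,\mu)$ must satisfy, which is conceptually closer to the commuting-frame picture. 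Your one genuine obligation --- that the hitting points exist, are unique, and vary smoothly on a sufficiently small neighborhood --- is exactly the implicit-function-theorem step you identified, so no gap remains once that is written out.
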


\begin{proof}
  We are looking for smooth positive functions $\lambda$ and $\mu$ such
  that the Lie bracket $[\lambda X, \mu Y]$ vanishes, since by
  Frobenius' theorem we will be able to find local coordinates $(u,v)$
  in $M$ satisfying $\partial_u=\lambda X$ and $\partial_v=\mu
  Y$. Writing $[X,Y] = fX+gY$ for suitable smooth functions $f$ and $g$
  and using the standard properties of Lie brackets leads to
  \[0=[\lambda X,\mu Y]=\big(\lambda\mu f -\mu
  Y(\lambda)\big)X+\big(\lambda\mu g+\lambda X(\mu)\big)Y,\] that is
  \begin{equation}
    \label{eq:pdesys}
    Y(\lambda)=\lambda f\quad\mbox{and}\quad -X(\mu)=\mu g.
  \end{equation}

  If $(x,y)$ is any coordinate system at $p$ then we may write
  $X=a\partial_x+b\partial_y$ and $Y=c\partial_x+d\partial_y$, where
  $a,b,c$ and $d$ are smooth functions such that $ad-bc\neq 0$ and
  $(a^2+b^2)(c^2+d^2)\neq 0$. Hence we may
  write \[X(\mu)=a\mu_x+b\mu_y\quad\mbox{and}\quad
  Y(\lambda)=c\lambda_x+d\lambda_y\] so that equations (\ref{eq:pdesys})
  become \[
  \begin{cases}
    c\dfrac{\lambda_x}{\lambda}+d\dfrac{\lambda_y}{\lambda}=f,\\[0.5em]
    a\dfrac{\mu_x}{\mu}+b\dfrac{\mu_y}{\mu}=g.
  \end{cases}\]

  Let $h_1(x,y)=\ln\big(\lambda(x,y)\big)$ and
  $h_2(x,y)=\ln\big(\mu(x,y)\big)$. With this, the equations above are
  written as 
  \[
  \begin{cases}
    c(x,y)(h_1)_x(x,y)+d(x,y)(h_1)_y(x,y)=f(x,y),\\[0.5em]
    a(x,y)(h_2)_x(x,y)+b(x,y)(h_2)_y(x,y)=g(x,y),
  \end{cases}\] which are two linear first order PDEs, that have
  solutions due to the condition $ad-bc\neq 0$. Hence we have found
  $\lambda=e^{h_1}$ and $\mu=e^{h_2}$ as desired.
\end{proof}

In the case of a Lorentz surface $M$, lemma \ref{lem:local_coords}
ensures that we can, at least locally, write the metric on $M$ as
\begin{equation}
  \label{eq:met_loc}
  {\rm d}s^2=E\,{\rm d}u^2+G\,{\rm d}v^2,
\end{equation}
where $E$ and $G$ are smooth functions such that $EG<0$. To see this,
apply the lemma for two non-lightlike and non-zero orthogonal vector
fields of opposite causal characters.

\begin{lem}
  In this setting the Gaussian curvature of the metric is
  \begin{equation}
    \label{eq:K_orthog}
    K=
    \frac{-1}{\sqrt{|EG|}}\left(\epsilon_1\left(\frac{(\sqrt{|G|})_u}{\sqrt{|E|}}\right)_u+
      \epsilon_2\left(\frac{(\sqrt{|E|})_v}{\sqrt{|G|}}\right)_v\right),
  \end{equation}
  where $\epsilon_1$ (resp. $\epsilon_2$) is $1$ or $-1$ if $\partial_u$
  (resp. $\partial_v$) is spacelike or timelike.
\end{lem}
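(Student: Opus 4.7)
The plan is to compute the Gaussian curvature directly from its definition as the sectional curvature, using the Christoffel symbols of the orthogonal metric and then tracking signs as one passes from $E,G$ to $|E|,|G|$. First, I would compute the Christoffel symbols from the standard formula $\Gamma_{ij}^k = \tfrac{1}{2}g^{kl}(\partial_i g_{jl} + \partial_j g_{il} - \partial_l g_{ij})$. Because $F = 0$, the inverse metric is diagonal with $g^{uu} = 1/E$ and $g^{vv} = 1/G$ (well-defined because $EG < 0$ forces $E,G \neq 0$), so only six Christoffels survive and they have exactly the same form as in the Riemannian case (e.g.\ $\Gamma^u_{vv} = -G_u/(2E)$, $\Gamma^v_{uv} = G_u/(2G)$, etc.).

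Next I would compute $R(\partial_u,\partial_v)\partial_v = \nabla_{\partial_u}\nabla_{\partial_v}\partial_v - \nabla_{\partial_v}\nabla_{\partial_u}\partial_v$, using $[\partial_u,\partial_v]=0$ to drop the bracket term. In dimension two with $F=0$, the Gaussian curvature equals the sectional curvature
$$K = \frac{\langle R(\partial_u,\partial_v)\partial_v,\partial_u\rangle}{EG - F^2} = \frac{\langle R(\partial_u,\partial_v)\partial_v,\partial_u\rangle}{EG},$$
so contracting with $\partial_u$ and dividing by $EG$ yields a formula for $K$ in terms of $E,G$ and their first and second partial derivatives. This step is routine but somewhat long; importantly the calculation does \emph{not} use the signs of $E$ or $G$, only that they are nonzero.

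The final step is to recast this raw expression into the form stated in the lemma. Writing $E = \epsilon_1|E|$ and $G = \epsilon_2|G|$, the hypothesis $EG < 0$ forces $\epsilon_1\epsilon_2 = -1$ and $|EG| = -EG$. Using identities like $2(\sqrt{|E|})_v = |E|_v/\sqrt{|E|}$ together with repeated factoring of $\sqrt{|E|}$ and $\sqrt{|G|}$, one rearranges the raw formula into the two divergence-like terms $\left((\sqrt{|G|})_u/\sqrt{|E|}\right)_u$ and $\left((\sqrt{|E|})_v/\sqrt{|G|}\right)_v$, with the signs $\epsilon_1,\epsilon_2$ emerging in the positions indicated by~(\ref{eq:K_orthog}) from the interplay between the $1/(EG)$ prefactor and the $-1 = \epsilon_1\epsilon_2$.

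The main obstacle is precisely this sign bookkeeping: the underlying computation is classical and nearly identical to the derivation of the Brioschi formula in orthogonal Riemannian coordinates, but in the Lorentz setting one must systematically replace the meaningless $\sqrt{E},\sqrt{G}$ by $\sqrt{|E|},\sqrt{|G|}$ and track every sign. An appealing shortcut is to simply observe that formally substituting $E \mapsto \epsilon_1|E|$, $G \mapsto \epsilon_2|G|$ into the classical Riemannian orthogonal-coordinates formula produces (\ref{eq:K_orthog}); however, justifying this shortcut rigorously reduces to the same direct computation.
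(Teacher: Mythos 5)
Your outline is correct, but it takes a different route from the paper only in the sense that the paper does not prove this lemma at all: it simply cites O'Neill \cite[p.~54]{ON'} for the statement. Your plan is the standard self-contained derivation, and it does go through: with $F=0$ the six surviving Christoffel symbols are exactly the Riemannian ones, the contraction $\langle R(\partial_u,\partial_v)\partial_v,\partial_u\rangle/(EG-F^2)$ gives
\[
K=-\frac{G_{uu}+E_{vv}}{2EG}+\frac{E_uG_u+E_v^2}{4E^2G}+\frac{E_vG_v+G_u^2}{4EG^2},
\]
and substituting $E=\epsilon_1|E|$, $G=\epsilon_2|G|$ and expanding the two divergence-like terms of (\ref{eq:K_orthog}) reproduces this expression term by term, so the asserted rearrangement is indeed only bookkeeping. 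Two small remarks: first, your final step is asserted rather than carried out, so a referee would want the short verification above actually displayed; second, your explanation of where the signs come from is slightly off --- the identity (\ref{eq:K_orthog}) holds for arbitrary nonzero $E,G$ of any signs, with $\epsilon_1,\epsilon_2$ entering only through $|E|=\epsilon_1E$, $|G|=\epsilon_2G$, so the hypothesis $EG<0$ (i.e.\ $\epsilon_1\epsilon_2=-1$) is never actually needed in the cancellation, and invoking the ``interplay'' with $-1=\epsilon_1\epsilon_2$ is a red herring. Compared with the paper's citation, your approach buys self-containedness at the cost of a routine but somewhat lengthy computation, which for this paper's purposes is a perfectly reasonable trade.
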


For a proof of the lemma above see~\cite[p. 54]{ON'}.

\section{Fermi-type coordinates and its properties}

From equation (\ref{eq:geo_local}) one sees that given a point $p\in M$
and any unit vector $w\in T_pM$ there exists a unique geodesic
$\gamma:I\to M$ passing through $p$ with velocity $w$.

We can use this fact to exhibit a local parametrisation of $M$ known, in
Euclidean case, as \emph{Fermi parametrisation} (see~\cite[p. 275]{Opr}
and~\cite[p. 192]{Bar}). For a Lorentz surface we have distinct
Fermi-type parametrisations, according to the causal type of the vector
$w=\gamma'(0)\in T_pM$. In order to obtain a regular parametrisation we
avoid lightlike geodesics. The construction is the same in both
remaining cases and it is done as follows:

Fix an unit speed geodesic $\gamma\colon I \to M$. For each
$\gamma(v)\in M$ consider the unit speed geodesic $\gamma_v:J_v\to M$
intersecting $\gamma$ orthogonally in $\gamma_v(0)=\gamma(v)$. Set the
map ${\bf x}$ as ${\bf x}(u,v) = \gamma_v(u)$, for each $v\in I$ and
$u\in J_v$.

Recall that geodesics have constant causal type, that is, if $\gamma$ is
spacelike (resp. timelike) at some point, $\gamma$ will be spacelike
(resp. timelike) everywhere. Therefore all of the $\gamma_v$ will be
timelike (resp. spacelike), since $\big\{\gamma'(v),\gamma'_v(0)\big\}$
is an orthonormal basis of $T_{\gamma(v)}M$, for all $v\in I$.

\begin{proposition}\label{prop:regular}
  ${\bf x}$ is regular in some open neighborhood of $\gamma(I)\subset
  M$.
\end{proposition}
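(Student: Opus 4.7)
The plan is to show that $d\mathbf{x}$ has rank $2$ at every point of the form $(0,v)\in I\times\{0\}$; regularity in an open neighborhood of $\gamma(I)$ then follows by continuity of the Jacobian determinant.

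First I would establish that $\mathbf{x}$ is well defined and smooth on some open set of the form $\{(u,v): v\in I,\ |u|<\varepsilon(v)\}$. The standard smooth dependence of solutions of ODEs on initial data, applied to the geodesic equation \eqref{eq:geo_local}, provides this: the initial velocity $\gamma'_v(0)$ of the orthogonal geodesic depends smoothly on $v$ (choose it by rotating $\gamma'(v)$ in the $2$-plane $T_{\gamma(v)}M$, which is possible smoothly since the metric and $\gamma$ are smooth), and then the geodesic flow yields a smooth map $\mathbf{x}$.

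Next I would compute the two partial derivatives of $\mathbf{x}$ along the base curve $u=0$. Since $\mathbf{x}(0,v)=\gamma_v(0)=\gamma(v)$, differentiation in $v$ at $u=0$ gives
\[
\partial_v\mathbf{x}(0,v)=\gamma'(v).
\]
On the other hand, fixing $v$ and differentiating in $u$ yields
\[
\partial_u\mathbf{x}(0,v)=\gamma_v'(0),
\]
which by construction is a unit vector orthogonal to $\gamma'(v)$ and of opposite causal character. Hence $\{\partial_u\mathbf{x}(0,v),\partial_v\mathbf{x}(0,v)\}$ is an orthonormal (pseudo-orthonormal) basis of $T_{\gamma(v)}M$; in particular, the two partials are linearly independent, so $d\mathbf{x}_{(0,v)}$ has rank $2$ for every $v\in I$.

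Finally, choose any local chart around a point $\gamma(v_0)$ and let $J(u,v)$ denote the determinant of the Jacobian matrix of $\mathbf{x}$ expressed in that chart. The function $J$ is smooth, and the preceding step gives $J(0,v)\neq 0$ for all $v$ near $v_0$. By continuity, $J$ remains nonzero on an open neighborhood of $(0,v_0)$ in the $(u,v)$-domain; taking the union of such neighborhoods over $v\in I$ produces an open neighborhood of $\gamma(I)$ on which $\mathbf{x}$ is regular. No step here is a real obstacle; the only mild subtlety is the smooth choice of the orthogonal initial vector $\gamma'_v(0)$ along $\gamma$, which is handled as indicated above.
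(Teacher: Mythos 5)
Your proposal is correct and follows essentially the same route as the paper: compute $\partial_u\mathbf{x}(0,v)=\gamma_v'(0)$ and $\partial_v\mathbf{x}(0,v)=\gamma'(v)$, note they are orthogonal and non-lightlike (hence linearly independent) along $\gamma$, and extend regularity to an open neighborhood by continuity. Your extra remarks on smoothness of $\mathbf{x}$ and the Jacobian argument simply spell out details the paper leaves implicit.
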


\begin{proof}
  We show that ${\bf x}_u$ e ${\bf x}_v$ are linearly independent in
  some neighborhood of $(0,v)$, for all $v\in I$. It suffices to note
  that \[ F(0,v) = \langle {\bf x}_u(0,v),{\bf x}_v(0,v)\rangle =
  \langle \gamma_v'(0), \gamma'(v)\rangle = 0. \] Then ${\bf
    x}_u(0,v)$ e ${\bf x}_v(0,v)$ are orthogonal along $\gamma(I)$,
  hence linearly independent (since they are not lightlike) in some open
  neighborhood of it in $M$.
\end{proof}

\begin{proposition}\label{prop:metric}
  In this coordinate system the metric of $M$ is given by
  \begin{equation}
    \label{eq:metric}
    {\rm d}s^2 = -\epsilon_{\gamma}\,{\rm d}u^2 + G\,{\rm d}v^2
  \end{equation}
  where $\epsilon_\gamma$ is $1$ if $\gamma$ is spacelike or $-1$ if
  timelike.
\end{proposition}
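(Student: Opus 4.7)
The plan is to verify that, in the Fermi-type coordinates $(u,v)$ determined by the reference geodesic $\gamma$, the coefficients of the first fundamental form satisfy $E \equiv -\epsilon_\gamma$, $F \equiv 0$, and $G$ remains an arbitrary smooth function of $(u,v)$. Showing that $E$ is constant and that $F$ vanishes identically are the two substantive tasks; once these are in hand, the displayed expression for ${\rm d}s^2$ follows immediately.

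For $E$, the key observation is that ${\bf x}_u(u,v) = \gamma_v'(u)$, and by construction each $\gamma_v$ is a unit-speed geodesic whose causal type is opposite to that of $\gamma$ (as noted just before Proposition~\ref{prop:regular}). Therefore $E(u,v) = \langle \gamma_v'(u), \gamma_v'(u) \rangle$ equals $-1$ when $\gamma$ is spacelike and $+1$ when $\gamma$ is timelike, and in both cases this is exactly $-\epsilon_\gamma$.

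For $F \equiv 0$, I would differentiate $F$ in the $u$-direction. By metric compatibility of the Levi-Civita connection,
\begin{equation*}
F_u = \partial_u \langle {\bf x}_u, {\bf x}_v \rangle = \langle \nabla_{{\bf x}_u} {\bf x}_u, {\bf x}_v \rangle + \langle {\bf x}_u, \nabla_{{\bf x}_u} {\bf x}_v \rangle.
\end{equation*}
The first term vanishes because each $u$-curve $u \mapsto {\bf x}(u,v_0) = \gamma_{v_0}(u)$ is a geodesic, so $\nabla_{{\bf x}_u} {\bf x}_u = 0$. For the second term I would use $[{\bf x}_u, {\bf x}_v] = 0$ together with the torsion-free property to swap connections, and then invoke metric compatibility once more to obtain
\begin{equation*}
\langle {\bf x}_u, \nabla_{{\bf x}_u} {\bf x}_v \rangle = \langle {\bf x}_u, \nabla_{{\bf x}_v} {\bf x}_u \rangle = \frac{1}{2}\,\partial_v \langle {\bf x}_u, {\bf x}_u \rangle = \frac{1}{2} E_v.
\end{equation*}
Since $E \equiv -\epsilon_\gamma$ is constant, $E_v = 0$, and hence $F_u \equiv 0$. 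Combining this with the initial condition $F(0,v) = \langle \gamma_v'(0), \gamma'(v) \rangle = 0$ (already recorded in the proof of Proposition~\ref{prop:regular}) yields $F(u,v) \equiv 0$ throughout the parametrised region.

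I do not anticipate a serious obstacle: the argument is the direct Lorentzian analogue of the standard Gauss-lemma-style computation for Fermi coordinates in the Riemannian setting, and every ingredient -- the geodesic equation for $\gamma_v$, torsion-freeness and metric compatibility of $\nabla$, and the opposite causal characters of $\gamma$ and $\gamma_v$ -- is already at our disposal. The only mildly delicate point is verifying that $F_u \equiv 0$ holds throughout the open neighborhood of $\gamma(I)$ supplied by Proposition~\ref{prop:regular}, so that the vanishing of $F$ propagates to the entire domain on which ${\bf x}$ is a legitimate parametrisation.
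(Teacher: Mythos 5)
Your proposal is correct and follows essentially the same route as the paper: both compute $E=-\epsilon_\gamma$ from the fact that each unit-speed $\gamma_v$ has causal type opposite to $\gamma$, and both reduce $F\equiv 0$ to the initial condition $F(0,v)=0$ plus the vanishing of $F_u$, which in each case comes from the $u$-curves being geodesics together with the constancy of $E$. The only difference is presentational: you carry out the Gauss-lemma-style computation invariantly (metric compatibility and torsion-freeness of $\nabla$, giving $F_u=\tfrac{1}{2}E_v=0$), whereas the paper extracts $\Gamma_{11}^2=0$ from the coordinate form (\ref{eq:geo_local}) of the geodesic equation and substitutes into (\ref{eq:chris_met}) to obtain $g^{22}F_u=0$ with $g^{22}\neq 0$.
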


\begin{proof}
  All $\gamma_v$ are unit speed curves with the same causal type
  $\epsilon_{\gamma_v}$. We have \[ E(u,v) = \langle {\bf x}_u(u,v),{\bf
    x}_u(u,v)\rangle = \langle \gamma'_v(u),\gamma'_v(u)\rangle =
  \epsilon_{\gamma_v}.\] Now $\epsilon_\gamma \epsilon_{\gamma_v} = -1$
  for all $v$, since $M$ is $2-$dimensional, the metric is
  non-degenerate and has index $1$. Hence $E(u,v) =
  -\epsilon_\gamma$.

  Also, by construction, $F(0,v) = 0$ for all $v$. It remains to show
  that $F$ does not depend on $u$. Fixing $v_0\in I$, note that
  $\gamma_{v_0}(u) = {\bf x}(u,v_0)$ has coordinates $u^1 = u$ e $u^2 =
  v_0$, so making $k = 2$ in (\ref{eq:geo_local}) yields $\Gamma_{11}^2
  = 0$.
  
  Writing $(g_{ij})=\begin{pmatrix} E&F\\F&G
  \end{pmatrix}$ and its inverse as $(g^{ij})$, the expressions of
  Christoffel in terms of these objects are (see~\cite[p. 62]{ON}):
  \begin{equation}
    \label{eq:chris_met}
    \Gamma_{ij}^k=\sum_{r=1}^2\frac{1}{2}g^{kr}\left(\frac{\partial
        g_{ir}}{\partial u^j}+\frac{\partial g_{jr}}{\partial
        u^i}-\frac{\partial g_{ij}}{\partial
        u^r}\right).
  \end{equation}
  Setting $k=2$ and $i=j=1$ in equation (\ref{eq:chris_met}) leads to
  $g^{22}F_u(u,v_0)=0$. It is clear \linebreak that
  $g^{22}=\dfrac{-\epsilon_\gamma}{\det (g_{ij})} \neq 0$, hence
  $F_u(u,v_0)=0$, that is, $F(u,v)=0$.
\end{proof}

\begin{remark}
  Since $G(0,v)=\epsilon_\gamma\neq 0$ and $G$ is continuous, we can
  assume that the neighborhood of $\gamma(I) \subseteq M$ found on
  proposition \ref{prop:regular} (reducing it if necessary) is such that
  $G$ and $\epsilon_\gamma$ have the same sign.
\end{remark}

\begin{lem}
  In the same setting of the previous proposition, the Gaussian
  curvature of the \linebreak metric ${\rm d}s^2 = -\epsilon_\gamma\,{\rm d}u^2 +
  G\,{\rm d}v^2$ is
 \begin{equation}
   \label{eq:K}
   K=\epsilon_\gamma \frac{(\sqrt{|G|})_{uu}}{\sqrt{|G|}}.
 \end{equation}
\end{lem}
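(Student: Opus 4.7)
The plan is to directly specialize the general orthogonal-coordinates formula~(\ref{eq:K_orthog}) to the metric given by Proposition~\ref{prop:metric}, where $E=-\epsilon_\gamma$ is constant and $F=0$.

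First I would identify the signs $\epsilon_1,\epsilon_2$ appearing in~(\ref{eq:K_orthog}). Since $\partial_u = \mathbf{x}_u = \gamma_v'(u)$ has the opposite causal character to $\gamma$ (as already observed in the construction of $\mathbf{x}$, because $\{\gamma'(v),\gamma_v'(0)\}$ is an orthonormal basis of the index-$1$ plane $T_{\gamma(v)}M$), we get $\epsilon_1 = -\epsilon_\gamma$. Likewise $\partial_v$ agrees in causal type with $\gamma$ along $u=0$, and by the remark $G$ keeps the sign of $\epsilon_\gamma$ on the whole neighborhood, so $\epsilon_2 = \epsilon_\gamma$.

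Next I would simplify each term of~(\ref{eq:K_orthog}). Because $|E|=1$, the second summand $\epsilon_2\bigl((\sqrt{|E|})_v/\sqrt{|G|}\bigr)_v$ vanishes identically, and the first summand reduces to $\epsilon_1(\sqrt{|G|})_{uu}$. Moreover $\sqrt{|EG|}=\sqrt{|G|}$. Plugging back in gives
\[
K \;=\; \frac{-1}{\sqrt{|G|}}\,\epsilon_1\,(\sqrt{|G|})_{uu} \;=\; \frac{-1}{\sqrt{|G|}}\,(-\epsilon_\gamma)(\sqrt{|G|})_{uu} \;=\; \epsilon_\gamma\,\frac{(\sqrt{|G|})_{uu}}{\sqrt{|G|}},
\]
which is exactly~(\ref{eq:K}).

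There is no real obstacle here: the whole argument is bookkeeping of signs, and the only subtle point is correctly pairing $\epsilon_1$ with the causal type of $\partial_u$ (opposite to $\epsilon_\gamma$) rather than with the sign of $E=-\epsilon_\gamma$ itself — but since these coincide, no confusion arises. The fact that $G$ does not change sign on the chart, guaranteed by the preceding remark, is what allows $\sqrt{|G|}$ to be smoothly differentiated twice in $u$.
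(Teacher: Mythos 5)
Your proposal is correct and follows exactly the paper's route: the paper's proof is simply ``make $E=-\epsilon_\gamma$ in~(\ref{eq:K_orthog})'', and you have carried out that substitution with the sign bookkeeping ($\epsilon_1=-\epsilon_\gamma$, $\epsilon_2=\epsilon_\gamma$, vanishing of the second term, $\sqrt{|EG|}=\sqrt{|G|}$) made explicit.
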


\begin{proof}
  Make $E=-\epsilon_\gamma$ in (\ref{eq:K_orthog}).
\end{proof}

\begin{lem}\label{lem:bndry_cndtns}
  $G_u(0,v) = 0$ for each $v$.
\end{lem}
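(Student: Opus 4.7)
The plan is to exploit the fact that, just as each $\gamma_v$ is a geodesic by construction, the base curve $\gamma$ is \emph{also} a unit-speed geodesic of $M$, and its coordinate representation is very simple: since $\mathbf{x}(0,v) = \gamma(v)$, the curve $\gamma$ is given by $u^1(t) = 0$ and $u^2(t) = t$ (the parameter $v$ is already arclength for $\gamma$). This mirrors the trick already used in the proof of Proposition \ref{prop:metric}, where the geodesic equation for $\gamma_{v_0}$ was applied to extract $\Gamma_{11}^2 = 0$; here we instead apply it to $\gamma$ itself.

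Concretely, I would substitute $\dot u^1 = 0$ and $\dot u^2 = 1$ into the geodesic equation (\ref{eq:geo_local}) with $k=1$, which immediately yields $\Gamma_{22}^1(0,v) = 0$ for every $v$. The second step is to expand $\Gamma_{22}^1$ using (\ref{eq:chris_met}). Because the coordinates are orthogonal ($F \equiv 0$), both $g_{12}$ and the off-diagonal entry $g^{12}$ of the inverse metric vanish, so the sum over $r$ collapses to the $r=1$ term. That term simplifies to
\[
\Gamma_{22}^1 = \frac{1}{2} g^{11}\!\left(2\,\frac{\partial g_{12}}{\partial u^2} - \frac{\partial g_{22}}{\partial u^1}\right) = -\frac{1}{2 E}\, G_u = \frac{1}{2\epsilon_\gamma}\, G_u,
\]
since $E = -\epsilon_\gamma$. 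Evaluating at $u=0$ and using $\Gamma_{22}^1(0,v) = 0$ gives $G_u(0,v) = 0$, as required.

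There is no real obstacle here; the only thing to be careful about is justifying that $\gamma(v) = \mathbf{x}(0,v)$ is traced out with coordinate velocity $(0,1)$, i.e.\ that the geodesic equation may be applied to $\gamma$ directly in the $(u,v)$-chart. This follows because the parametrization of $\gamma$ fixed at the start of Section 3 is already by arclength, matching the way the coordinate $v$ was introduced. Once that is observed, the statement reduces to a one-line computation of a single Christoffel symbol.
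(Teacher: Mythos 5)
Your proof is correct and is essentially the paper's own argument: both apply the geodesic equation (\ref{eq:geo_local}) with $k=1$ to the base curve $\gamma(v)={\bf x}(0,v)$ to get $\Gamma_{22}^1(0,v)=0$, then expand that Christoffel symbol via (\ref{eq:chris_met}) (using $F\equiv 0$ and $g^{11}\neq 0$) to conclude $G_u(0,v)=0$. Your explicit computation $\Gamma_{22}^1=-G_u/(2E)$ is just a slightly more detailed version of the paper's one-line conclusion.
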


\begin{proof}
  Using the same notation in the proof of proposition \ref{prop:metric},
  observing that \linebreak $\gamma(v)=\gamma_v(0)={\bf x}(0,v)$ has coordinates $u^1
  = 0$, $u^2 = u$, making $k=1$ in (\ref{eq:geo_local}) we obtain
  $\Gamma_{22}^1 = 0$. Then, setting $k=1$ and $i=j=2$ in
  (\ref{eq:chris_met}) gives $G_u(0,v)=0$, since $g^{11}=\dfrac{G}{\det
    (g_{ij})}$.
\end{proof}

\begin{remark}
  We observe that there exists two Fermi-type coordinates:
  \begin{itemize}
    \item \emph{spacelike Fermi-type} coordinates, when the fixed
    geodesic $\gamma$ is spacelike, that is, $\epsilon_\gamma=1$;
    \item \emph{timelike Fermi-type} coordinates, when the fixed
    geodesic $\gamma$ is timelike, that is, $\epsilon_\gamma=-1$.
  \end{itemize}
  To avoid confusion we will denote the parameters in the timelike
  Fermi-type coordinates by $(\tau,\vartheta)$.
\end{remark}

\section{Proof of the Main Result}

At this point we are able to prove the main result of this work,
providing a local classification of metrics with constant curvature
dependng on the chosen Fermi-type coordinates. We work with the possible
values for $K$.

\begin{enumerate}[(a)]
  \item $K = 0$: in this case equation (\ref{eq:K}) becomes
  \[\frac{(\sqrt{G})_{uu}}{\sqrt{G}} = 0\quad\mbox{or}\quad
  -\frac{(\sqrt{-G})_{\tau\tau}}{\sqrt{-G}} = 0,\] according to
  causal type of $\gamma$, whose solutions are
  $\sqrt{G(u,v)}=A(v)u+B(v)$ and \linebreak
  $\sqrt{-G(\tau,\vartheta)}=A(\vartheta)\tau+B(\vartheta)$, respectively. Since
  $G=\epsilon_\gamma$ and $G_u=G_\tau=0$ along $\gamma$ (by lemma
  \ref{lem:bndry_cndtns}), we have $A\equiv 0$ and $B\equiv 1$.
  
  We conclude that \[{\rm d}s^2 = -{\rm d}u^2 + {\rm d}v^2 = {\rm
    d}\tau^2 - {\rm d}\vartheta^2,\] and we see that $M$ is locally
  isometric to the Lorentz-Minkowski plane $\LM^2$.

  \item $K=1$: 
  \begin{itemize}
    \item if $\gamma$ is spacelike, equation (\ref{eq:K})
    becomes \[\frac{(\sqrt{G})_{uu}}{\sqrt{G}} = 1,\] whose solutions
    are of the form $\sqrt{G(u,v)} = A(v)e^u + B(v)e^{-u}$. The initial
    conditions $G(0,v)=1$ and $G_u(0,v)=0$ lead to
    $A(v)=B(v)=\frac{1}{2}$ and $G(u,v)=\cosh^2u$, that is \[{\rm d}s^2
    = -{\rm d}u^2 + \cosh^2u\,{\rm d}v^2.\]
    \item if $\gamma$ is timelike, equation (\ref{eq:K})
    becomes \[\frac{(\sqrt{-G})_{\tau\tau}}{\sqrt{-G}}=1.\]
    Solving this differential equation, with initial conditions as
    above, we obtain\linebreak $G(\tau,\vartheta)=-\cos^2\tau$ and \[{\rm d}s^2
    = {\rm d}\tau^2 - \cos^2\tau\,{\rm d}\vartheta^2.\]
  \end{itemize}

  \item $K=-1$: in this case equation (\ref{eq:K}) is very similar to
  the previous case and the solutions are $G(u,v)=\cos^2u$ e
  $G(\tau,\vartheta)=-\cosh^2\tau$. Hence \[ {\rm d}s^2 = -{\rm d}u^2
  + \cos^2u\,{\rm d}v^2\quad\mbox{or}\quad {\rm d}s^2={\rm d}\tau^2
  - \cosh^2\tau\,{\rm d}\vartheta^2.\]
\end{enumerate}

\section{Realization of those metrics}

In the previous section we provided local expressions for metrics of
constant curvature. Now we exhibit immersions of these metrics in $\LM^3$
and $\R^3_2$. Straightforward computations show that the following
immersions have the desired metrics.

\begin{enumerate}[(a)]
  \item For $K=0$ the immersions are trivially given by inclusions as
  coordinate planes.
  \item For $K=1$ we have
  \begin{itemize}
    \item ${\rm d}s^2 = -{\rm d}u^2+\cosh^2u\,{\rm d}v^2$.
    \begin{enumerate}[(i)]
      \item ${\bf x}\colon \R^2 \to \sph^2_1 \subseteq \LM^3$, \[{\bf
        x}(u,v) = (\cosh u \cos v, \cosh u \sin v, \sinh u).\]
      \item ${\bf x}\colon \cosh^{-1}\big(]1,\sqrt{2}[\big)\times \R \to \R^3_2$,
      \[{\bf x}(u,v) = \left(\cosh u \cosh v, \cosh u \sinh v,
        \int_0^u\sqrt{2-\cosh^2t}\,{\rm d}t\right).\]
    \end{enumerate}
    
    \item ${\rm d}s^2 = {\rm d}\tau^2 - \cos^2\tau\,{\rm d}\vartheta^2$. 
    \begin{enumerate}[(i)]
      \item ${\bf x}\colon\R^2 \to \sph^2_1\subseteq\LM^3$, \[{\bf
        x}(\tau,\vartheta) = (\sin\tau, \cos\tau \cosh \vartheta,
      \cos\tau \sinh \vartheta).\]
      \item ${\bf x}\colon \Big]-\frac{\pi}{2},\frac{\pi}{2}\Big[\times\R\to
      \R^3_2$, \[{\bf x}(\tau,\vartheta) = \left(\int_0^\tau
        \sqrt{1+\sin^2t}\,{\rm d}t, \cos\tau\cos\vartheta,
        \cos\tau\sin\vartheta\right).\]
    \end{enumerate}
    
    \begin{remark}\label{obs:maxper}
      Since translations are isometries in $\R^3_2$ and we have the
      periodicity condition ${\bf x}(\tau,\vartheta)={\bf
        x}(\tau+\pi,\vartheta)$, we can restrict the domain of the latter
      parametrization to the given above, which is maximal since the
      metric is singular at its boundary.
    \end{remark}
  \end{itemize}

  If $K=1$ the surface can be seen, for example, as a piece of one of
  the following surfaces (up to isometries of the ambient):

  \begin{figure}[H]
    \centering
    \subfloat[$K=1$ in $\LM^3$]{
      \includegraphics[width=.3\textwidth]{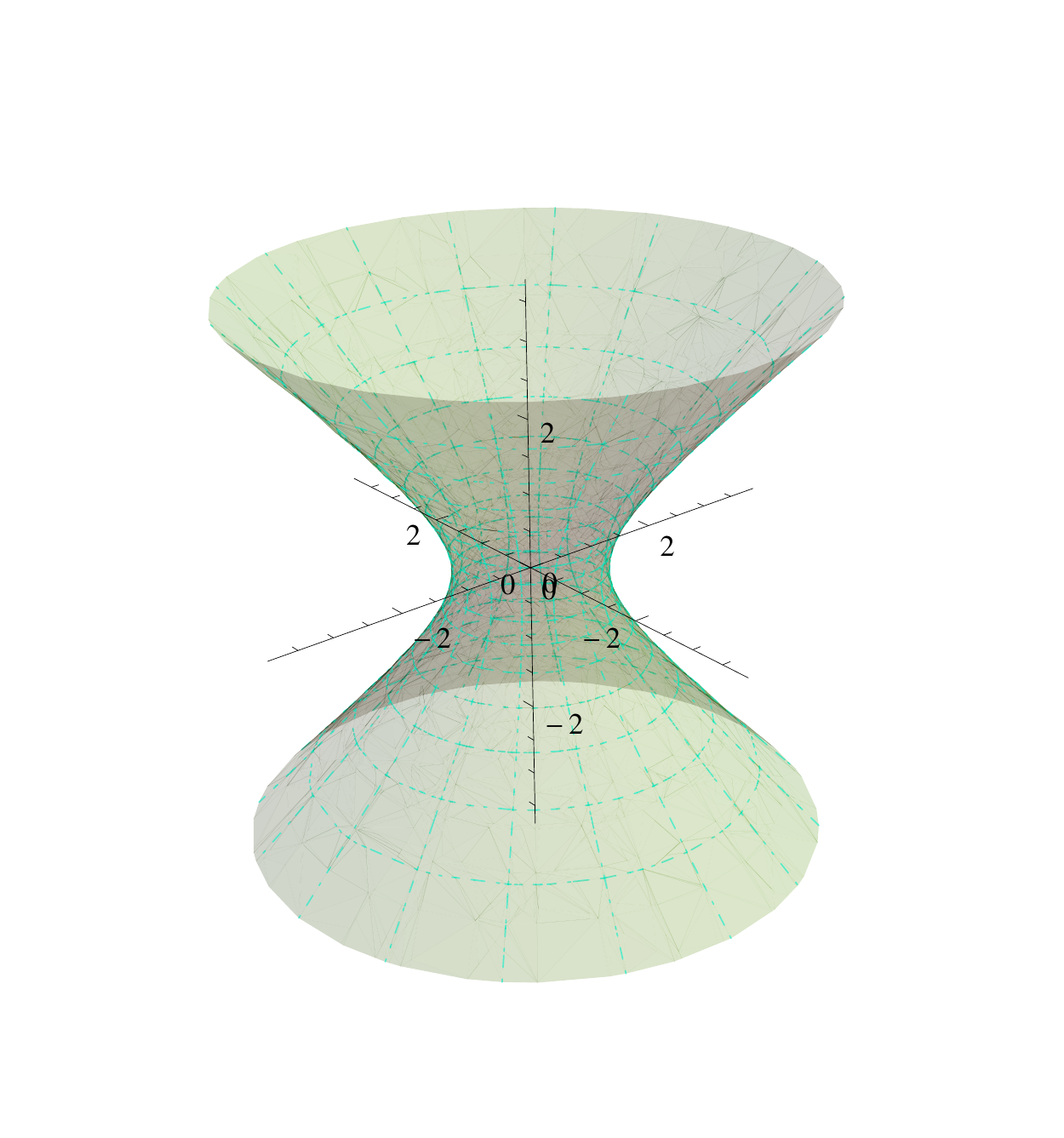}\label{fig:K1L3}}
    \ \subfloat[$K=1$ in $\R^3_2$]{
      \includegraphics[width=.3\textwidth]{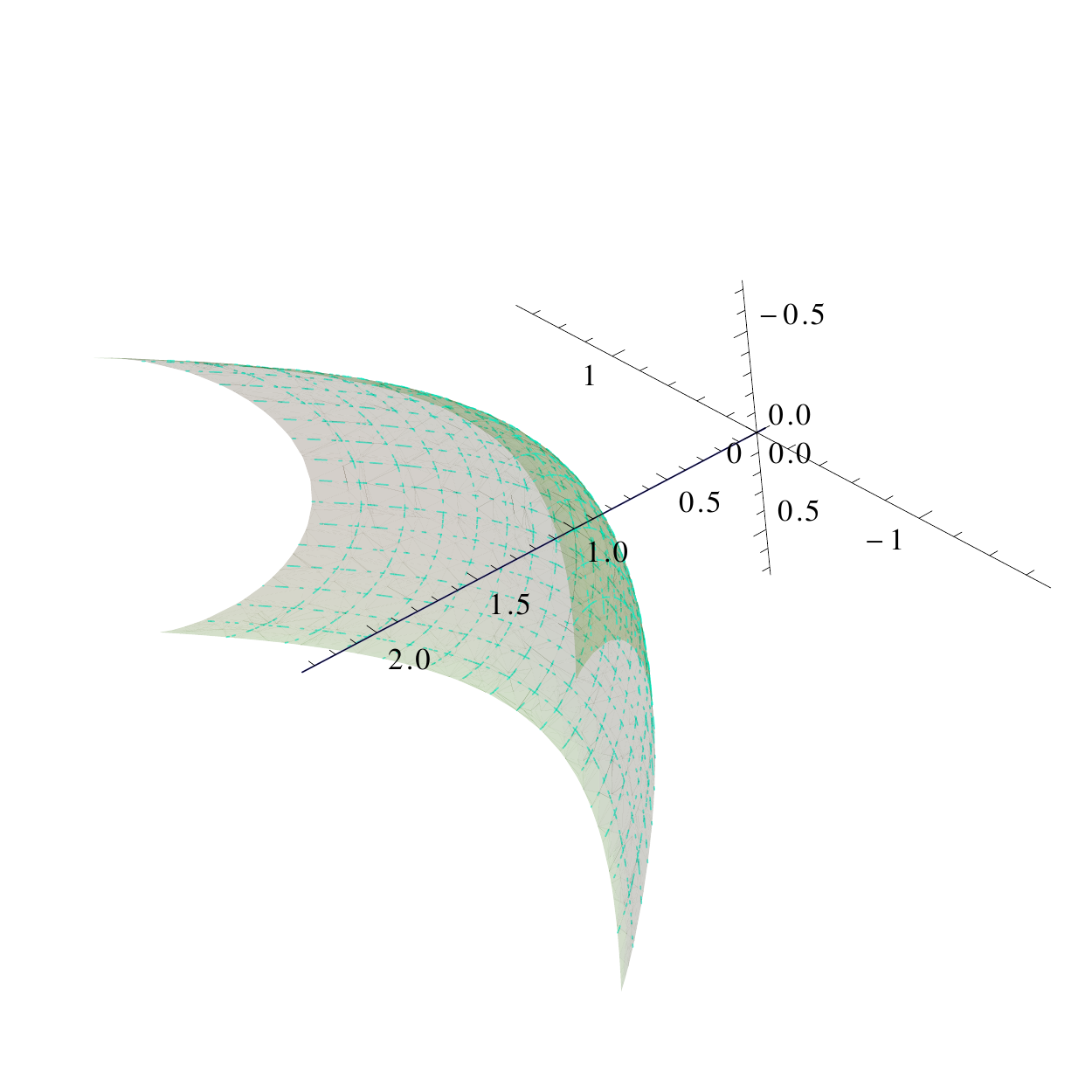} 
      \includegraphics[width=.3\textwidth]{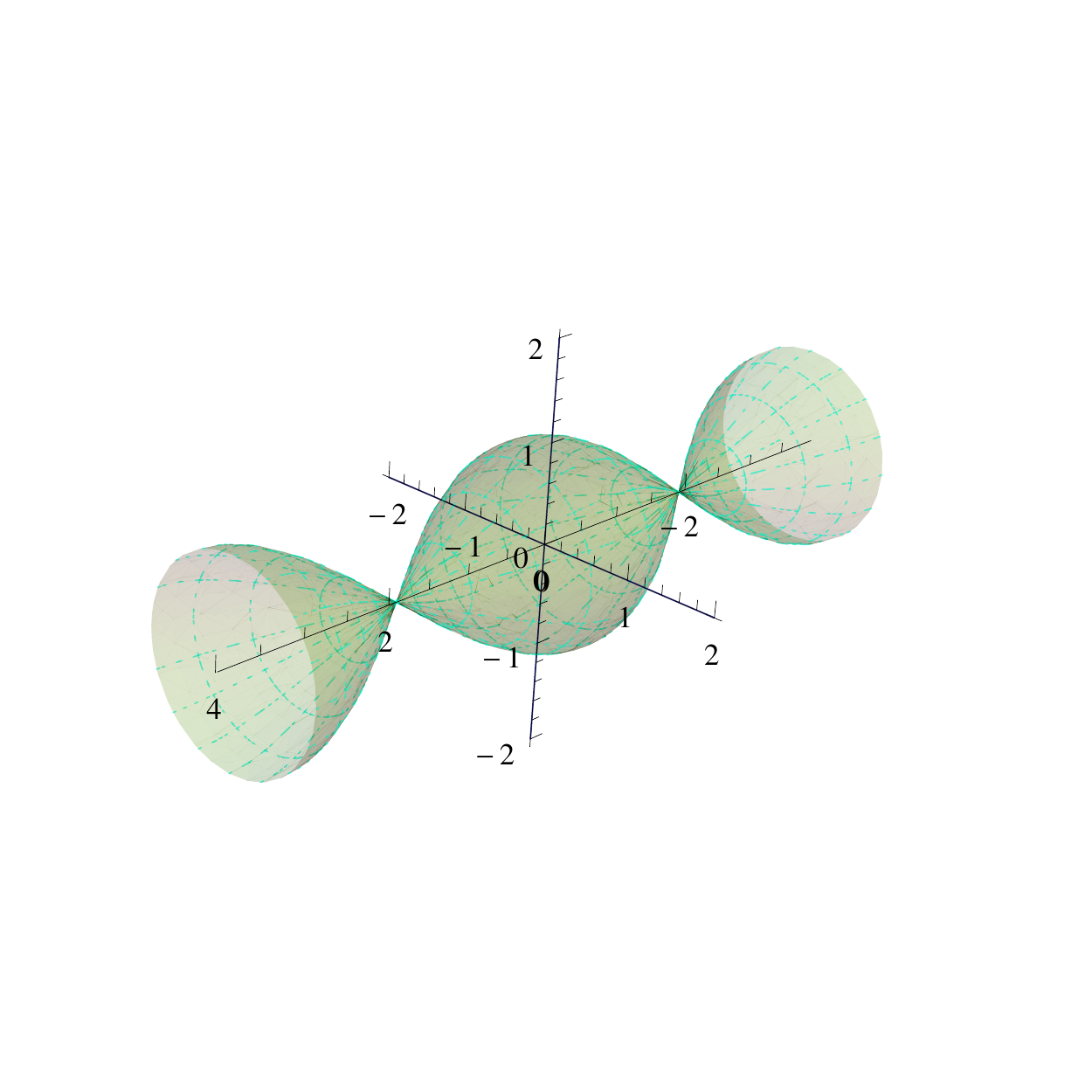}\label{fig:K1R32}}\ 
    \caption{Constant Gaussian Curvature $1$}
    \label{fig:K=1}
  \end{figure}

  \item $K=-1$:
  
  \begin{itemize}
    \item ${\rm d}s^2 = -{\rm d}u^2 + \cos^2u\,{\rm d}v^2$.
    \begin{enumerate}[(i)]
      \item ${\bf x}\colon\Big]-\frac{\pi}{2},\frac{\pi}{2}\Big[\times\R \to
      \LM^3$, \[{\bf x}(u,v) = \left(\cos u\cos v, \cos u \sin v,
        \int_0^u\sqrt{1+\sin^2t}\,{\rm d}t\right).\]
      \item ${\bf x}\colon \R^2 \to\hyp^2_1\subseteq\R^3_2$, \[{\bf x}(u,v) =
      (\cos u \sinh v, \cos u\cosh v, \sin u).\]
    \end{enumerate}
    The same situation in remark \ref{obs:maxper} holds in this case.

    \item ${\rm d}s^2 = {\rm d}\tau^2 - \cosh^2\tau\,{\rm d}\vartheta^2$.
    \begin{enumerate}[(i)]
      \item ${\bf x}\colon \cosh^{-1}\big(]1,\sqrt{2}[\big)\times \R\to\LM^3$,
      \[{\bf x}(\tau,\vartheta) = \left(\int_0^\tau
        \sqrt{2-\cosh^2t}\,{\rm
          d}t,\cosh\tau\cosh\vartheta,\cosh\tau\sinh\vartheta\right).\]
      \item ${\bf x}\colon \R^2 \to \hyp^2_1 \subseteq \R^3_2$, \[{\bf
        x}(\tau,\vartheta)=(\sinh\tau,\cosh\tau\cos\vartheta,\cosh\tau\sin\vartheta).\]
    \end{enumerate}
  \end{itemize}
  
  If $K=-1$ the surface can be seen, for example, as a piece of one of
  the following surfaces (again up to isometries of the ambient):

  \begin{figure}[H]
    \centering
    \subfloat[$K=-1$ in $\R^3_2$]{
      \includegraphics[width=.3\textwidth]{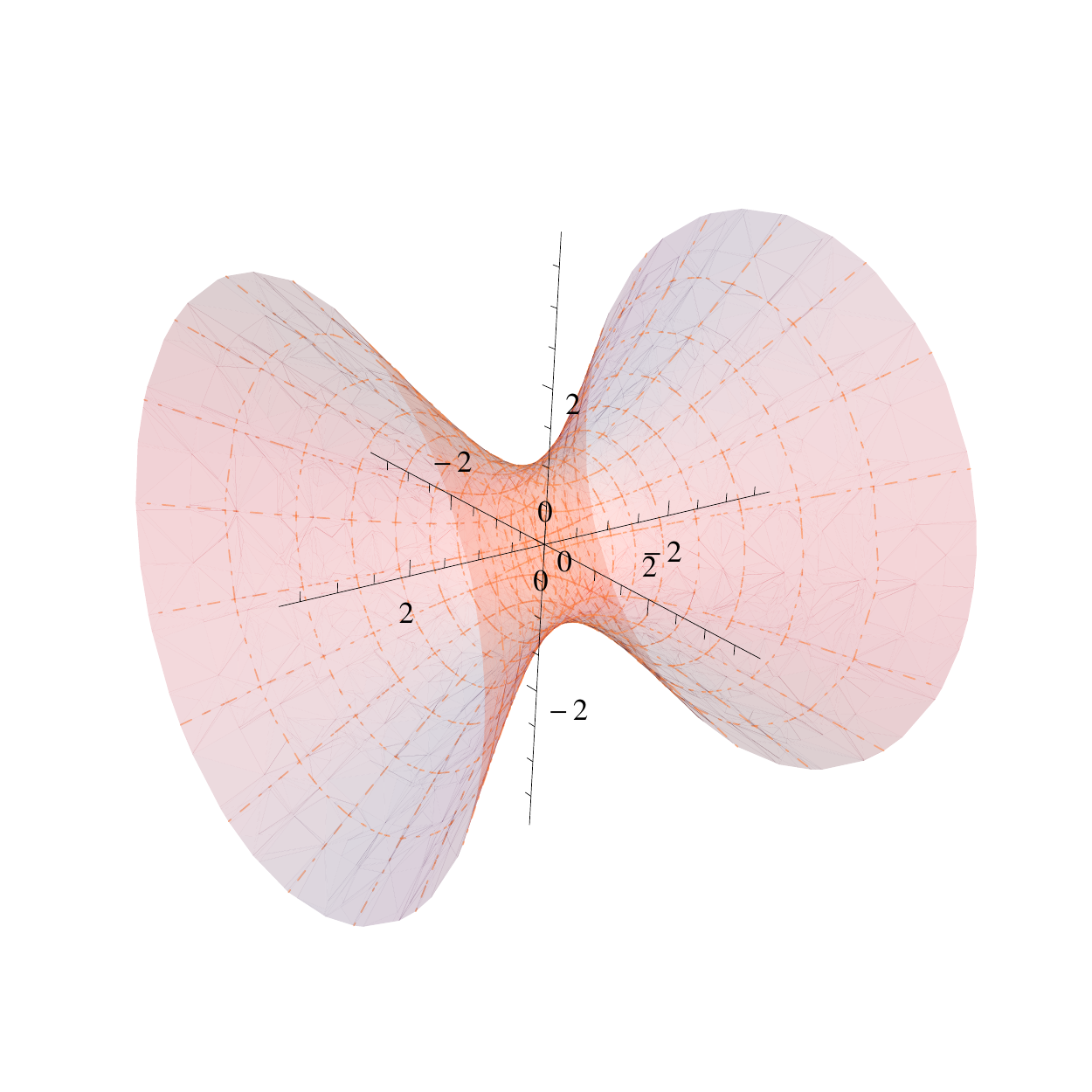}\label{fig:K-1R32}} \ 
    \subfloat[$K=-1$ in $\LM^3$]{
      \includegraphics[width=.3\textwidth]{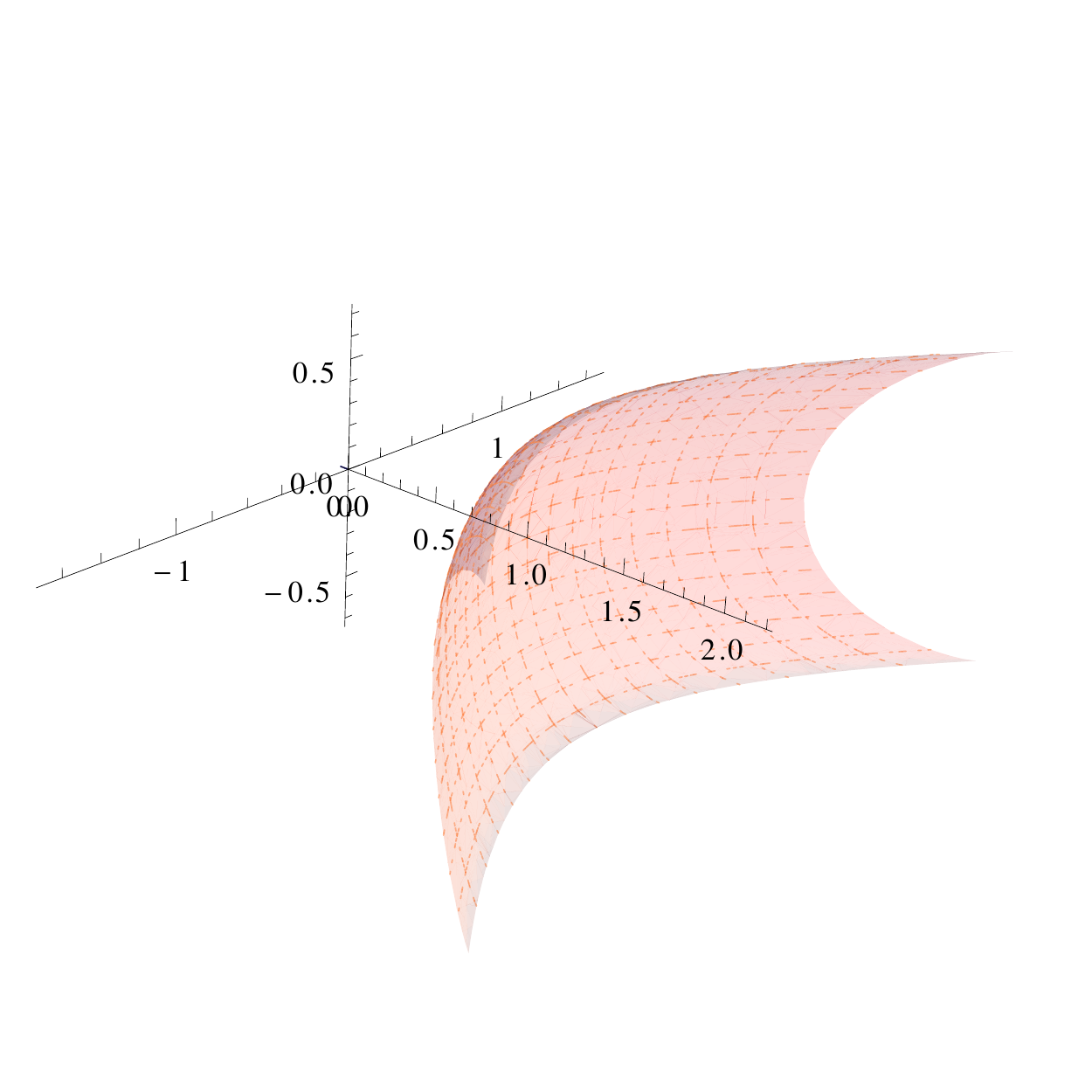} 
      \includegraphics[width=.3\textwidth]{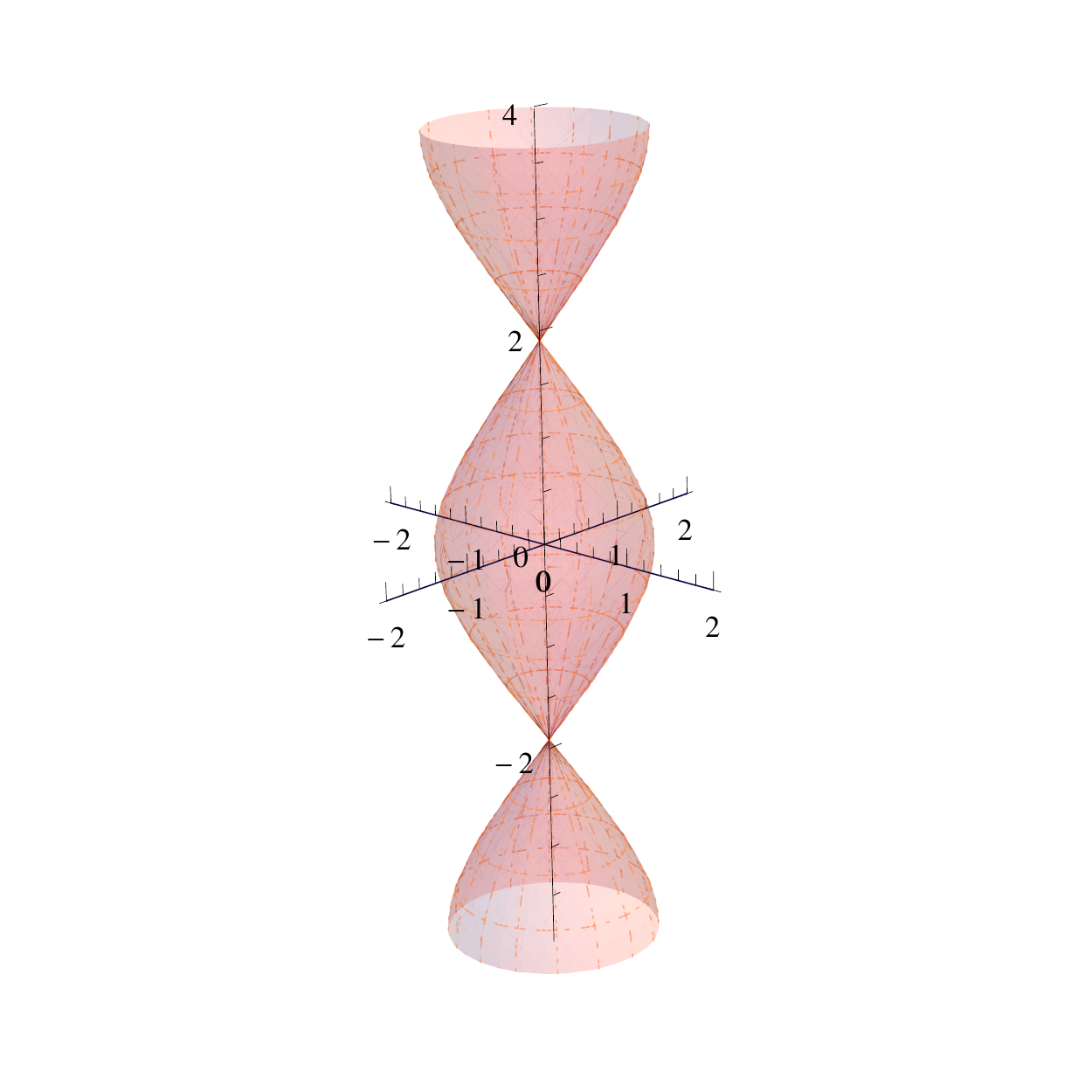}\label{fig:K-1L3}}\ 
    \caption{Constant Gaussian Curvature $-1$}
    \label{fig:K=-1}
  \end{figure}

  Finally, we note that the surfaces in figure
  \ref{fig:K=1}\subref{fig:K1L3} and figure
  \ref{fig:K=-1}\subref{fig:K-1R32} are isometric when seen as surfaces
  in $\R^3$ (with its induced metric) but in the pseudo-Riemannian
  ambients considered here they have rotational symmetry around axes
  with distinct causal types. The same holds for the corresponding
  surfaces in figures \ref{fig:K=1}\subref{fig:K1R32} and
  \ref{fig:K=-1}\subref{fig:K-1L3}.

\end{enumerate}

%    Bibliographies can be prepared with BibTeX using amsplain,
%    amsalpha, or (for "historical" overviews) natbib style.
\bibliographystyle{amsplain}
%    Insert the bibliography data here.

\bigskip

\begin{minipage}{0.45\linewidth}
  \footnotesize{
    \noindent
    {\sc Ivo Terek Couto\\
    Instituto de Matem\'atica e Estat\'\i stica \\ Universidade de S\~ao
    Paulo}\\
  \emph{E-mail address}: {\tt terek@ime.usp.br}}
\end{minipage}
\begin{minipage}{0.45\linewidth}
  \footnotesize{
    \noindent
    {\sc Alexandre Lymberopoulos\\
      Instituto de Matem\'atica e Estat\'\i stica \\ Universidade de S\~ao
      Paulo}\\
    \emph{E-mail address}: {\tt lymber@ime.usp.br}}
\end{minipage}

\end{document}